\pgfplotsset{compat=1.14}
\pgfplotsset{
    discard if not/.style 2 args={
        x filter/.code={
            \edef\tempa{\thisrow{#1}}
            \edef\tempb{#2}
            \ifx\tempa\tempb
            \else
                \def\pgfmathresult{inf}
            \fi
        }
    }
}
\newcommand{\software}[1]{\textsc{#1}}
\newcommand{\pytential}{\software{Pytential}}
\newcommand{\firedrake}{Firedrake}
\newcommand{\petsc}{\software{PETSc}}
\newcommand{\bfx}{\mathbf{x}}
\newcommand{\bfb}{\mathbf{b}}
\setlist[enumerate]{leftmargin=.5in}
\setlist[itemize]{leftmargin=.5in}
\crefname{hypothesis}{Hypothesis}{Hypotheses}
\title{Finite elements for Helmholtz equations with a nonlocal boundary condition\thanks{Submitted to the editors DATE.
\funding{This work was funded by NSF awards SHF-1909176 and SHF-1911019.
This material is based upon work
supported by the U.S. Department of Energy, Office of
Science, Office of Advanced Scientific Computing
Research, Department of Energy Computational Science
Graduate Fellowship under Award Number DE-SC0021110.}}}
\author{Robert C. Kirby\thanks{Baylor University, Department of Mathematics, Waco, TX
  (\email{robert\_kirby@baylor.edu}, \url{https://sites.baylor.edu/robert_kirby/}).}
\and Andreas Kl\"ockner\thanks{Department Computer Science, University of Illinois at Urbana-Champaign, Urbana, IL 
  (\email{andreask@illinois.edu})}
\and Ben Sepanski\thanks{University of Texas, Department of Computer
    Science, Austin, TX
    (\email{ben\_sepanski@utexas.edu})}}
\begin{document}

\maketitle

\begin{abstract}
  Numerical resolution of exterior Helmholtz problems requires some
  approach to domain truncation.  As an alternative to approximate
  nonreflecting boundary conditions and invocation of the
  Dirichlet-to-Neumann map, we introduce a new, nonlocal boundary
  condition.  This condition is exact and requires the evaluation of
  layer potentials involving the free space Green's function.
  However, it seems to work in general unstructured geometry, and Galerkin
  finite element discretization leads to convergence under the usual
  mesh constraints imposed by G{\aa}rding-type inequalities.  The
  nonlocal boundary conditions are readily approximated by fast
  multipole methods, and the resulting linear system can be preconditioned by
  the purely local operator involving transmission boundary conditions. 
\end{abstract}

\begin{keywords}
  finite element method, Helmholtz, boundary conditions, layer potentials
\end{keywords}

\begin{AMS}
  65N30, 65N80, 65F08
\end{AMS}

\section{Introduction}

The exterior Helmholtz problem plays an essential role in scattering problems and also serves as a starting point to consider exterior problems in electromagnetics and problems in other unbounded domains such as waveguides.  The literature contains several techniques to address the challenge that unbounded domains pose for numerical methods.   Essentially, these techniques include some combination of truncating the domain to a bounded one, posing boundary conditions that enforce (or approximate) the Sommerfeld condition on the newly-introduced boundary, and/or modifying the PDE near the computational boundary to absorb any reflected waves.

An early paper on finite elements for the exterior problem is~\cite{goldstein1982exterior}, where the domain is truncated at radius $R$ and an approximate radiation condition is posed at $R$.  Although the error estimates contain a factor of $R^{-2}$, it is also possible to carefully increase the mesh spacing near the boundary, somewhat mitigating the cost of a large domain. \emph{Perfectly matched layers}~\cite{berenger1994perfectly} modify the PDE near the boundary of the computational domain, changing the coefficient of the elliptic term to `absorb' outgoing waves.  While such methods allow small effective computational domains, the resulting linear systems do not yield readily to standard iterative techniques like multigrid, although we refer to recent work~\cite{safin2018preconditioned} that poses a domain decomposition strategy to use a direct solver only near the boundary and standard iterative techniques inside.

There is also considerable literature on \emph{nonlocal} boundary conditions for domain truncation.  Following early work~\cite{goldstein1982finite,hou1985approximation,KELLER1989172}, one can use a Dirichlet-to-Neumann (DtN) operator on the artificial boundary to enforce proper far-field behavior.  Givoli~\cite{givoli1991non} provides a survey of similar techniques and local conditions as well, and~\cite{givoli1992spatially, grote1995exact} give techniques for the time rather than frequency domain case.  The DtN is typically given as an infinite series (truncated in computation) obtained by separating variables.  This limits the shape of the domain boundary, although perturbations of such domains and use of high-order methods are possible~\cite{binford2009exact,NICHOLLS2004278,nicholls2006error}.  Careful error analysis for finite element discretizations can include the effect of truncating the infinite series as well as polynomial approximation error~\cite{koyama2007error}.   Lastly, it is worth noting that boundary integral equation methods solve exterior Helmholtz problems with optimal complexity (linear in the number of boundary degrees of freedom), however they are somewhat more difficult to adapt than (volume) PDE discretizing-methods to specific (and potentially nonlinear) near-surface physics.

In this paper, we propose an alternative nonlocal boundary condition based on Green's Theorem~\cite{steinbach2007numerical} that has several important features.  Like the DtN approach, we have an (in principle) exact boundary condition, incurring no error in our domain truncation.  However, because we rely on the free-space Green's function, there is (again, in principle) no restriction on the shape of our computational domain.   The layer potentials appearing in our nonlocal boundary condition can be efficiently computed by appropriate fast algorithms such as variants of the Fast Multipole Method~\cite{carrier_fast_1988}.  So, although Galerkin's method would give matrices with dense sub-blocks, we can quickly compute the matrix-vector product required in a Krylov method.  Because our nonlocal operator involves double integrals over distinct boundaries, we avoid the need to evaluate any singular integrals.  We note that ``two-boundary'' approaches have been explored in the time-domain literature~\cite{givoli1995nonreflecting,hoch2008nonreflecting,ting1986exact}.
Finally, the local part of the operator (a standard finite element matrix) serves as an excellent preconditioner for the system, so that an optimal solver for the local part would give $\mathcal{O}(n \log n)$ solution time in unstructured geometry.  Our method works equally well in two and three space dimensions.  

Another method combining boundary integral and volumetric discretizations is due to Johnson and N\'ed\'elec~\cite{johnson1980coupling, sayas2009validity}.  This technique encloses a compactly-supported volume source in a truncating boundary.  Finite elements are used to compute the solution inside the boundary and a boundary integral method is used on the boundary to handle the exterior.  Our present method bears some similarly, employing the same kind of operators.  However, we require only a single finite element space and do not introduce additional unknowns on the domain boundary.

In the rest of the paper, we pose the model and its finite element discretization in Section~\ref{sec:model}.  We describe a preconditioned Krylov system for this system in Section~\ref{sec:solve}.  Our implementation, which relies on the high-level codes \firedrake{}~\cite{firedrake} and \pytential{}~\cite{pytential}, warrants some discussion, which is given in Section~\ref{sec:imp}.  Finally, we give numerical results in Section~\ref{sec:results}.

\section{Model and discretization}%
\label{sec:model}
Let $\Omega^c \subset \mathbb{R}^{d}$ with $d=2, 3$ be a bounded
domain with boundary $\Gamma$, and
$\Omega = \mathbb{R}^d \backslash \Omega^c$ its exterior.
We consider the classic Helmholtz exterior problem on $\Omega$
\begin{equation}
\label{eq:helmholtz}
-\Delta u - \kappa^2 u = 0,
\end{equation}
where $\kappa^2$ is nonzero wave number.  It may be complex (typically with positive real part), and may take different forms in various application fields such as acoustics or electromagnetics.
We also pose Neumann boundary conditions
\begin{equation}
\label{eq:neumann}
\tfrac{\partial u}{\partial n} = f
\end{equation}
on the interior boundary $\Gamma$.  The Sommerfeld radiation condition
\begin{equation}
    \lim_{r \rightarrow \infty} r^{\tfrac{d-1}{2}} \left(
        \tfrac{\partial u}{\partial r} - i \kappa u
    \right) = 0,
\label{eq:sommerfeld}
\end{equation}
where $r$ is the outward radial direction, must also hold.
For computational purposes, one typically poses the problem only on a
truncated domain $\Omega^\prime$.  Hence, we impose an artificial
boundary $\Sigma$, and let $\Omega^\prime$ denote that subset of
$\Omega$ enclosed between $\Gamma$ and $\Sigma$.  We assume that these boundaries are such that $\Omega^\prime$ is a Lipschitz domain.
An example is shown in Figure~\ref{fig:2ddomain}:
\begin{figure}[!ht]
    \begin{center}
    \begin{tikzpicture}[scale=0.8]
      \draw [thick,fill=black!30] (-3,-3) rectangle (3,3)
        (0, 0) circle (1);
      \node at (0,0) {$\Omega^c$};
      \node at (-2,2) {$\Omega'$};
      \node [anchor=west] at (1,0) {$\Gamma$};
      \node [anchor=west] at (3,0) {$\Sigma$};
    \end{tikzpicture}
    \end{center}
    \caption{A 2D example of a truncated domain}
    \label{fig:2ddomain}
\end{figure}
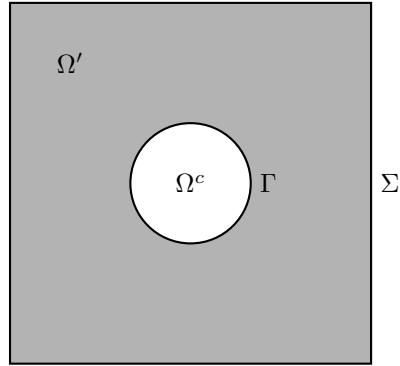

A major challenge for volume-discretizing numerical methods is imposing a suitable boundary condition on $\Sigma$.  For example,
a simple approach is to impose the Robin-type condition
\begin{equation}
\label{eq:transmission}
i \kappa u - \tfrac{\partial u}{\partial n} = 0
\end{equation}
on $\Sigma$ rather than at infinity.  Frequently called ``transmission'' boundary conditions, this changes the boundary value problem, incurring errors that do not vanish under mesh refinement, and can create artificial wave reflections at the boundary.

We propose a new approach to the problem that, for constant-coefficient problems at least, allows highly effective iterative solvers to be combined with effective domain truncation.
Let
\begin{equation*}
  \mathcal K(x):=
    \begin{cases}
        u(x) = \frac{i}{4} H_0^{(1)}(\kappa |x|) & d = 2, \\
        u(x) = \frac{i}{4\pi|x|} e^{i\kappa |x|} & d = 3.
    \end{cases}
\end{equation*}
be the free-space Green's function for the Helmholtz equation, where $H_0^{(1)}(x)$ be the first-kind Hankel function of index 0.
Recall Green's formula in the exterior~\cite[Thm.~2.5]{colton_inverse_1998} for the solution $u(x)$ to~\eqref{eq:helmholtz}:
\begin{equation}
\label{equation:uDL}
u(x) = \int_\Gamma \left( \tfrac{\partial}{\partial n} \mathcal K(x-y ) \right)u(y) - \left(\tfrac{\partial}{\partial n} u(y) \right) \mathcal K( x - y ) dy,
\end{equation}
for $x\in\Omega'$.  It is known that Green's theorem holds in general Lipschitz domains~\cite{rjasanow2007fast}.

  Substituting in the Neumann boundary condition~\eqref{eq:neumann}, we obtain
\begin{equation}
\begin{split}
  u(x) &= \int_\Gamma \left( \tfrac{\partial}{\partial n} \mathcal{K}
  \left(\left|x-y\right| \right) \right)u(y) - f(y) \mathcal{K}\left( \left| x - y \right| \right) dy \\
& \equiv D(u)(x) - S(f)(x),
\end{split}
\end{equation}
for $x\in\Omega'$, where
\begin{equation}
D(u)(x) = \operatorname{PV} \int_\Gamma \left( \tfrac{\partial}{\partial n} \mathcal K( x-y) \right)u(y) dy
\end{equation}
is the double layer potential and
\begin{equation}
S(u)(x) = \int_\Gamma  \mathcal K (x - y ) u(y)dy
\end{equation}
is the single layer potential~\cite{colton_inverse_1998}.

Now, we can pose an exact nonlocal Robin-type boundary condition as follows.  We use the representation~\eqref{equation:uDL} to write (suppressing the argument $x$):
\begin{equation}
i \kappa u - \tfrac{\partial u}{\partial n}
= i \kappa \left( D(u) - S(f) \right) - \tfrac{\partial}{\partial n} \left( D(u) - S(f) \right),
\end{equation}
so that over $\Sigma$,
\begin{equation}
\label{eq:dudnmess}
\tfrac{\partial u}{\partial n} =
i \kappa u - \left(i \kappa - \tfrac{\partial}{\partial n} \right) \left( D(u) - S(f) \right).
\end{equation}

\subsection{Variational Setting}
We let $\left( f, g \right) = \int_{\Omega^\prime} f(x) \overline{g(x)} dx$ be the standard $L^2$ inner product over the computational domain, and $\langle f , g \rangle_{S}$ that over some portion $S \subseteq \partial \Omega^\prime$ of its boundary.  We also let $H^s(\Omega^\prime)$ be the standard Sobolev spaces consisting of $L^2$ functions with weak derivatives of order up to and including $s$ in $L^2$.

When $X$ is some Banach space, $\| \cdot \|_X$ refers to its norm.  As we use several different norms throughout our analysis, we explicitly label each such norm to limit confusion.

We give a variational formulation of the PDE and hence a standard Galerkin finite element discretization as follows.
We take the inner product of~\eqref{eq:helmholtz} with any $v \in H^1(\Omega^\prime)$.  Integration by parts and the Neumann boundary condition on $\Gamma$ give
\begin{equation}
\label{eq:afteribp}
\left( \nabla u , \nabla v \right) - \kappa^2 \left( u , v \right)
- \langle \tfrac{\partial u}{\partial n}, v \rangle_\Sigma =
\langle f, v \rangle_{\Gamma},
\end{equation}
and substituting~\eqref{eq:dudnmess} in for $\tfrac{\partial u}{\partial n}$ on $\Sigma$ gives
\begin{equation}
\begin{split}
\left( \nabla u , \nabla v \right) - \kappa^2 \left( u , v \right)
- i \kappa \langle u, v \rangle_{\Sigma}
+ \langle \left( i \kappa - \tfrac{\partial}{\partial n}  \right) D(u) , v \rangle_\Sigma
\\= \langle f , v \rangle_\Gamma + \langle \left( i \kappa - \tfrac{\partial}{\partial n} \right) S(f), v \rangle_{\Sigma}.
\end{split}
\end{equation}
Hence, the solution to the Helmholtz equation~\eqref{eq:helmholtz} on $\Omega$ together with~\eqref{eq:neumann} and~\eqref{eq:sommerfeld} satisfies the variational problem of finding $u \in H^1(\Omega^\prime)$ such that
\begin{equation}
a(u, v) = F(v)
\end{equation}
for all $v \in H^1(\Omega^\prime)$.  Here, the bilinear form
\begin{equation}
  \label{eq:blf}
a(u, v)
= \left( \nabla u , \nabla v \right) - \kappa^2 \left( u , v \right)
- i \kappa \langle u, v \rangle_{\Sigma}
+ \langle \left( i \kappa - \tfrac{\partial}{\partial n} \right) D(u) , v \rangle_\Sigma
\end{equation}
consists of the standard bilinear form using transmission boundary conditions~\eqref{eq:transmission} augmented by nonlocal terms involving a convolution-type integral with a Green's function kernel.   We write
\( a(u, v) = a_L(u, v) + a_{NL}(u, v) \), where
\begin{equation}
\begin{split}
a_{L}(u, v) &= \left( \nabla u , \nabla v \right) - \kappa^2 \left( u , v \right)
- i \kappa \langle u, v \rangle_{\Sigma}, \\
a_{NL}(u, v) & = \langle \left( i \kappa - \tfrac{\partial}{\partial n} \right) D(u) , v \rangle_\Sigma.
\end{split}
\end{equation}

Similarly, the linear form
\begin{equation}
\label{eq:F}
F(v) = \langle f , v \rangle_\Gamma + \langle\left( i \kappa - \tfrac{\partial}{\partial n} \right) S(f), v \rangle_{\Sigma}
\end{equation}
involves the Neumann data on the scatterer together with its appearance in the single layer potential.

By taking $V_h \subset H^1(\Omega^\prime)$ as any suitable finite element space, we can introduce a Galerkin finite element method of finding $u_h \in V_h$ such that
\begin{equation}
a(u_h , v_h) = F(v_h)
\end{equation}
for all $v_h \in V_h$.

At this point, we pause compare our method to the Dirichlet-to-Neumann map $P$. (In the literature, the same operator is sometimes called the Steklov-Poincaré operator. Generically, S-P operators convert one type of boundary data into another.) Replacing $\tfrac{\partial u}{\partial n}$ on $\Sigma$ in~\eqref{eq:afteribp} with $P$ acting on $u$ would give
\[
\left( \nabla u , \nabla v \right) - \kappa^2 \left( u , v \right)
+ \langle Pu, v \rangle_\Sigma =
\langle f, v \rangle_{\Gamma}.
\]
Compared to~\eqref{eq:blf}, this appears to only have a single nonlocal term.  Moreover, $P$ is a symmetric elliptic operator from $H^{1/2}(\Sigma)$ into $H^{-1/2}(\Sigma)$, so that $\langle P u, u \rangle \geq 0$ and a G{\aa}rding estimate readily holds for the bilinear form. Unfortunately, the Steklov-Poincaré operator is not typically explicitly available, and thus its application requires the solution of a linear system at additional computational cost, e.g. in the form of a boundary integral equation solve. Approximating $Pu$ with a Fourier series is possible, however doing so requires separable geometry.

\subsection{Convergence theory}

Our argument will rely on showing the boundedness of the bilinear form $a$ and establishing a G{\aa}rding-type inequality.  Using standard techniques~\cite{brenner2007mathematical}, this leads to discrete solvability and optimal \emph{a priori} error estimates under a constraint on the maximal mesh size.

We will rely on the trace estimates~\cite{brenner2007mathematical,grisvard2011elliptic} that since $\Omega^\prime$ is Lipschitz, there exists a constant $C$ such that
\begin{equation}
\label{eq:trace}
\left\| v \right\|_{L^2(\partial \Omega^\prime)} \leq C \left\| v \right\|^{1/2}_{L^2(\Omega^\prime)} \left\| v \right\|^{1/2}_{H^1(\Omega^\prime)}
\leq C \left\| v \right\|_{H^1(\Omega^\prime)}
\end{equation}
for all $v \in H^1(\Omega^\prime)$.

\begin{proposition}
If the Neumann data satisfies $f \in H^{-\frac{1}{2}}(\Gamma)$, the functional $F$ defined in~\eqref{eq:F} is a bounded linear functional on $H^1$.
\end{proposition}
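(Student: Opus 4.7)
The plan is to split $F$ into its two natural pieces and bound each separately, using the standard trace theorem for the $\Gamma$ term and the smoothness of the Helmholtz kernel on the well-separated product $\Sigma \times \Gamma$ for the $\Sigma$ term. Write $F(v) = F_\Gamma(v) + F_\Sigma(v)$, where
\begin{equation*}
  F_\Gamma(v) = \langle f, v\rangle_\Gamma,
  \qquad
  F_\Sigma(v) = \langle (i\kappa - \tfrac{\partial}{\partial n}) S(f), v \rangle_\Sigma.
\end{equation*}

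For $F_\Gamma$, interpret the bracket as the $H^{-1/2}(\Gamma)$--$H^{1/2}(\Gamma)$ duality pairing. The standard Lipschitz trace theorem gives a bounded trace $\gamma_\Gamma \colon H^1(\Omega') \to H^{1/2}(\Gamma)$, so
\begin{equation*}
  |F_\Gamma(v)| \le \|f\|_{H^{-1/2}(\Gamma)}\, \|\gamma_\Gamma v\|_{H^{1/2}(\Gamma)}
  \le C\, \|f\|_{H^{-1/2}(\Gamma)}\, \|v\|_{H^1(\Omega')}.
\end{equation*}

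For $F_\Sigma$, the essential observation is that $\Gamma$ and $\Sigma$ are disjoint compact (Lipschitz) surfaces, so $\delta := \operatorname{dist}(\Gamma, \Sigma) > 0$. Consequently the kernel $\mathcal K(x-y)$, together with all its $x$-derivatives, is $C^\infty$ on a neighborhood of $\Sigma \times \Gamma$. For each $x \in \Sigma$ the function $y \mapsto \mathcal K(x-y)$ therefore lies in $C^\infty(\Gamma) \subset H^{1/2}(\Gamma)$, with norm uniformly bounded in $x$; the same holds for $y \mapsto \tfrac{\partial}{\partial n_x}\mathcal K(x-y)$. Differentiating under the duality pairing, $S(f)$ and $\tfrac{\partial}{\partial n} S(f)$ extend to smooth functions on $\Sigma$, and one obtains
\begin{equation*}
  \bigl\| (i\kappa - \tfrac{\partial}{\partial n}) S(f) \bigr\|_{L^2(\Sigma)}
  \le C\, \|f\|_{H^{-1/2}(\Gamma)}.
\end{equation*}
Combined with Cauchy--Schwarz on $\Sigma$ and the trace estimate~\eqref{eq:trace}, this gives
\begin{equation*}
  |F_\Sigma(v)|
  \le \bigl\|(i\kappa - \tfrac{\partial}{\partial n}) S(f)\bigr\|_{L^2(\Sigma)}\, \|v\|_{L^2(\Sigma)}
  \le C\, \|f\|_{H^{-1/2}(\Gamma)}\, \|v\|_{H^1(\Omega')}.
\end{equation*}
Summing the two bounds yields boundedness of $F$ on $H^1(\Omega')$, with operator norm controlled by $\|f\|_{H^{-1/2}(\Gamma)}$.

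The main obstacle is the $\Sigma$ term: one must justify that applying $S$ (and its normal derivative) to an $H^{-1/2}(\Gamma)$ density and then restricting to $\Sigma$ produces an $L^2(\Sigma)$ function with the stated bound. This is purely a consequence of the separation between $\Gamma$ and $\Sigma$, which renders $S$ a smoothing operator from $H^{-1/2}(\Gamma)$ into $C^\infty(\Sigma)$; nothing about the singular behavior of $\mathcal K$ at the origin enters. Everything else is standard trace theory and duality.
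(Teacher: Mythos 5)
Your proof is correct, but it takes a genuinely different route from the paper for the nonlocal term. The paper disposes of $\langle (i\kappa-\tfrac{\partial}{\partial n})S(f),v\rangle_\Sigma$ by citing the boundary-element mapping property (Steinbach) that $S$ maps $H^{-1/2}(\Gamma)$ into $H^1(\Omega^\prime)$, so that $S(f)$, being a Helmholtz solution in $\Omega^\prime$, has a normal trace in $H^{-1/2}(\Sigma)$ that can be paired against the $H^{1/2}(\Sigma)$ trace of $v$. You instead exploit the positive distance between $\Gamma$ and $\Sigma$: the kernel and its derivatives are smooth near $\Sigma\times\Gamma$, so $y\mapsto\mathcal K(x-y)$ lies in $H^{1/2}(\Gamma)$ uniformly in $x\in\Sigma$, and pairing with $f$ gives an $L^\infty$, hence $L^2(\Sigma)$, bound on $(i\kappa-\tfrac{\partial}{\partial n})S(f)$ by $\|f\|_{H^{-1/2}(\Gamma)}$, after which Cauchy--Schwarz and the trace estimate finish the job. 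Your argument is more elementary and self-contained, avoids layer-potential mapping theory and normal traces of variational Helmholtz solutions, and mirrors the separation-of-boundaries argument the paper itself uses in Lemma~\ref{nllemma}; its price is that it relies essentially on $\operatorname{dist}(\Gamma,\Sigma)>0$, whereas the cited mapping property does not. A further difference in your favor: you treat $\langle f,v\rangle_\Gamma$ as an $H^{-1/2}$--$H^{1/2}$ duality pairing via the trace theorem, which is what the hypothesis $f\in H^{-1/2}(\Gamma)$ actually supports, while the paper's appeal to Cauchy--Schwarz on $\Gamma$ implicitly assumes $f\in L^2(\Gamma)$. (Minor quibble: since $\Sigma$ is only Lipschitz, $\tfrac{\partial}{\partial n}S(f)$ is an a.e.-defined bounded function on $\Sigma$ rather than literally smooth, but this does not affect your $L^2(\Sigma)$ bound.)
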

\begin{proof}
Linearity is clear from the linearity of integration and
differentiation.  To see that it is bounded, let $v \in H^1(\Omega)$
be given.  The local portion of $F$ is bounded thanks to
Cauchy-Schwarz and the second trace estimate in~\eqref{eq:trace}.  For
the nonlocal portion, it is known~\cite{steinbach2007numerical} that
$S(f) \in H^1(\Omega^\prime)$ and so it has a normal derivative on $\Sigma$
in $H^{-1/2}(\Sigma)$.
\end{proof}

The following result implies both the boundedness of $a_{NL}$ on $H^1 \times H^1$ and is critical to establishing the G{\aa}rding inequality:
\begin{lemma}
\label{nllemma}
There exists a $C_{NL} > 0$ such that for all $u, v \in H^1(\Omega^\prime)$,
\begin{equation}
|a_{NL}(u, v)| \leq C_{NL} \left\| u \right\|_{L^2(\Gamma)} \left\| v \right\|_{L^2(\Sigma)}.
\end{equation}
\end{lemma}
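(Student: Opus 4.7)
The plan is to exploit the disjointness of the two boundary components of $\Omega^\prime$: since $\Gamma$ and $\Sigma$ are disjoint closed Lipschitz surfaces, $\delta := \operatorname{dist}(\Gamma, \Sigma) > 0$. On the compact set $\Sigma \times \Gamma$ the free-space Green's function $\mathcal{K}(x-y)$ is therefore smooth (in fact real-analytic), which makes $u \mapsto (i\kappa - \tfrac{\partial}{\partial n}) D(u)$ a smoothing operator from $L^2(\Gamma)$ into $L^\infty(\Sigma)$.

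First, for a.e.\ $x \in \Sigma$ (the outward unit normal exists a.e.\ because $\Sigma$ is Lipschitz) I would differentiate under the integral sign to obtain
\begin{equation*}
D(u)(x) = \int_\Gamma K_1(x,y)\, u(y)\, dy,\qquad
\tfrac{\partial}{\partial n} D(u)(x) = \int_\Gamma K_2(x,y)\, u(y)\, dy,
\end{equation*}
where $K_1(x,y) = \tfrac{\partial}{\partial n_y}\mathcal{K}(x-y)$ and $K_2(x,y) = \tfrac{\partial}{\partial n_x}\tfrac{\partial}{\partial n_y}\mathcal{K}(x-y)$. Both kernels are continuous, hence uniformly bounded on $\Sigma \times \Gamma$ by some constant $M = M(\kappa, \delta)$.

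Second, a pointwise application of Cauchy-Schwarz in the $y$-variable yields
\begin{equation*}
|D(u)(x)| + \bigl|\tfrac{\partial}{\partial n} D(u)(x)\bigr| \leq 2M\, |\Gamma|^{1/2}\, \|u\|_{L^2(\Gamma)}
\end{equation*}
for a.e.\ $x \in \Sigma$. Combining these and integrating over $\Sigma$ gives the uniform bound
\begin{equation*}
\bigl\|(i\kappa - \tfrac{\partial}{\partial n}) D(u)\bigr\|_{L^2(\Sigma)} \leq M(1+|\kappa|)\, |\Gamma|^{1/2}\, |\Sigma|^{1/2}\, \|u\|_{L^2(\Gamma)}.
\end{equation*}
A final application of Cauchy-Schwarz to the pairing $\langle \cdot, v \rangle_\Sigma$ then delivers the claim, with $C_{NL} = M(1+|\kappa|)\,|\Gamma|^{1/2}|\Sigma|^{1/2}$.

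I do not anticipate a serious obstacle: the entire argument reduces to the observation that, because of the geometric separation of $\Gamma$ and $\Sigma$, no principal-value singularity is encountered and the kernels are bounded. The only mild subtlety is justifying that the normal derivative in $x$ may be moved under the integral sign on $\Sigma$, which follows from the continuity of $\mathcal{K}(x-y)$ and all its $x$-derivatives on the compact product $\Sigma \times \Gamma$, together with the a.e.\ existence of the outward normal on the Lipschitz surface $\Sigma$.
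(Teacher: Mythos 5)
Your proposal is correct and follows essentially the same route as the paper's proof: both exploit the positive distance between $\Gamma$ and $\Sigma$ to conclude that the (normal-derivative) kernels are smooth and uniformly bounded on $\Sigma\times\Gamma$, justify moving the normal derivative under the integral, and then apply Cauchy--Schwarz over $\Gamma$ and $\Sigma$ to obtain $C_{NL}\,\|u\|_{L^2(\Gamma)}\|v\|_{L^2(\Sigma)}$. If anything, your bookkeeping is a bit tidier than the paper's (you keep the $\partial_{n_y}$ in the double-layer kernel explicit and track the $(1+|\kappa|)$ factor), but the underlying argument is the same.
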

\begin{proof}
First, we simplify the notation by writing the first argument in $a_{NL}$ as
\begin{equation}
\label{eq:knl}
 \left( i \kappa - \tfrac{\partial}{\partial n} \right) D(u)
= \left( i \kappa - \tfrac{\partial}{\partial n} \right) \int_{\Gamma} \mathcal{K}(x-y) u(y) dy,
\end{equation}
From the properties of the kernel, $\mathcal{K}(x-y)$ is smooth and bounded provided that $\| x - y \|$ is bounded below away from zero.  Since we have $x \in \Sigma$ and $y \in \Gamma$, this is the case as long as the truncating boundary stays away from the scatterer.  By writing the normal derivative in~\eqref{eq:knl} as the limit of a difference quotient, passing under the integral, and appealing to the Lebesgue Dominated Convergence Theorem in the usual way, we can then write
\begin{equation}
\left( i \kappa - \tfrac{\partial}{\partial n} \right) D(u)
= \int_\Gamma \left( i \kappa - \tfrac{\partial}{\partial n} \right) \mathcal{K}(x-y) dy
= \int_\Gamma \widetilde{\mathcal{K}}(x-y) dy,
\end{equation}
where $\widetilde{\mathcal{K}}(x-y)$ is also smooth and bounded for $x$ and $y$ separated.  We can write the nonlocal bilinear form now as
\begin{equation}
|a_{NL}(u, v)|
= \left| \int_{\Gamma}\int_{\Sigma} \widetilde{\mathcal{K}}(x, y) u(y) \overline{v(x)} dx \, dy \right|
\leq
K_0 \int_\Gamma u(y) dy \int_{\Sigma} \overline{v(x)} dx,
\end{equation}
and the result holds with with $C_{NL} = K_0 | \Gamma|^{1/2} |\Sigma|^{1/2}$ by the Cauchy-Schwarz inequality.
\end{proof}

\begin{proposition}
There exists $C_B > 0$ such that for all $u, v \in H^1(\Omega^\prime)$,
\begin{equation}
\left| a(u, v) \right| \leq C_B \left\| u \right\|_{H^1(\Omega^\prime)} \left\| v \right\|_{H^1(\Omega^\prime)}.
\end{equation}
\end{proposition}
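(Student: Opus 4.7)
The plan is to split $a = a_L + a_{NL}$ and bound each piece separately, combining the constants at the end.

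For $a_L$, I would dispatch the three terms in turn. The gradient term $(\nabla u, \nabla v)$ and the mass term $-\kappa^2(u,v)$ yield to Cauchy--Schwarz in $L^2(\Omega')$, each bounded by a constant (depending on $\kappa$) times $\|u\|_{H^1(\Omega')}\|v\|_{H^1(\Omega')}$. For the boundary term $-i\kappa\langle u, v\rangle_\Sigma$, I apply Cauchy--Schwarz on $\Sigma$ to get $|\kappa|\|u\|_{L^2(\Sigma)}\|v\|_{L^2(\Sigma)}$, then invoke the second form of the trace estimate in~\eqref{eq:trace} twice (restricting from $\partial\Omega'$ to $\Sigma$ only increases the left-hand side, so the same bound applies) to replace both $L^2(\Sigma)$ norms by $H^1(\Omega')$ norms up to a constant.

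For $a_{NL}$, Lemma~\ref{nllemma} already provides the bound
\begin{equation*}
|a_{NL}(u,v)| \leq C_{NL}\|u\|_{L^2(\Gamma)}\|v\|_{L^2(\Sigma)},
\end{equation*}
and I again apply the trace estimate~\eqref{eq:trace} (now to both $\Gamma$ and $\Sigma$, each being subsets of $\partial\Omega'$) to obtain $|a_{NL}(u,v)| \leq C_{NL}'\|u\|_{H^1(\Omega')}\|v\|_{H^1(\Omega')}$.

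Adding the local and nonlocal bounds and taking $C_B$ to be the sum of the resulting constants finishes the proof. There is no genuine obstacle here: all the nontrivial analysis was already absorbed into Lemma~\ref{nllemma} (which handles the smoothness of the kernel away from the diagonal) and into the trace inequality~\eqref{eq:trace}. The only mild point to note is the implicit assumption, inherited from Lemma~\ref{nllemma}, that $\Sigma$ and $\Gamma$ are separated so that the kernel $\widetilde{\mathcal{K}}(x-y)$ is uniformly bounded.
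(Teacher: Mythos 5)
Your proposal is correct and follows essentially the same route as the paper: Cauchy--Schwarz term by term on the local part, Lemma~\ref{nllemma} for the nonlocal part, and the trace estimate~\eqref{eq:trace} to convert the boundary norms into $H^1(\Omega^\prime)$ norms. No further comment is needed.
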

\begin{proof}
Let $u, v \in H^1(\Omega^\prime)$.  Then
\begin{equation}
  \begin{split}
|a(u, v)| \leq & 
\left\| \nabla u \right\|_{L^2(\Omega^\prime)} \left\| \nabla v \right\|_{L^2(\Omega^\prime)} \\
& + \kappa^2 \left\| u \right\|_{L^2(\Omega^\prime)} \left\| v \right\|_{L^2(\Omega^\prime)}
+ \kappa \left\| u \right\|_{L^2(\Sigma)} \left\| v \right\|_{L^2(\Sigma)} + |a_{NL}(u, v)|,
\end{split}
\end{equation}
and the proof is finished by applying the previous Lemma and trace theorem.
\end{proof}

The bilinear form $a$ satisfies a G{\aa}rding inequality.  That is, shifting $a$ by a multiple of the $L^2$ inner product renders a coercive bilinear form.  For complex Hilbert spaces, it is sufficient to demonstrate that the real part itself is coercive.

\begin{proposition}
There exists a real number $M$ and an $\alpha > 0$ such that
\begin{equation}
\operatorname{Re}(a(u,u)) + M \left\| u \right\|_{L^2(\Omega^\prime)}^2 \geq \alpha \left\| u \right\|_{H^1(\Omega^\prime)}^2.
\end{equation}
\end{proposition}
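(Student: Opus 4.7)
The plan is to set $v=u$ in the bilinear form $a$ and take real parts. The four terms of $a$ contribute as follows: $\operatorname{Re}(\nabla u,\nabla u) = \|\nabla u\|^2_{L^2(\Omega^\prime)}$; $\operatorname{Re}(-\kappa^2 (u,u)) = -\operatorname{Re}(\kappa^2)\|u\|^2_{L^2(\Omega^\prime)}$; $\operatorname{Re}(-i\kappa \langle u,u\rangle_\Sigma) = \operatorname{Im}(\kappa)\|u\|^2_{L^2(\Sigma)}$; and the nonlocal term $\operatorname{Re}(a_{NL}(u,u))$ will be controlled in absolute value. The target is to show that all the indefinite pieces can be dominated by $\|\nabla u\|^2_{L^2(\Omega^\prime)}$ plus a sufficiently large multiple of $\|u\|^2_{L^2(\Omega^\prime)}$.

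First, I would apply Lemma~\ref{nllemma} with $v=u$ together with the elementary inequality $2ab \leq a^2+b^2$ to obtain
\[
|a_{NL}(u,u)| \leq \tfrac{C_{NL}}{2}\bigl(\|u\|^2_{L^2(\Gamma)} + \|u\|^2_{L^2(\Sigma)}\bigr)\leq \tfrac{C_{NL}}{2}\|u\|^2_{L^2(\partial\Omega^\prime)}.
\]
Next, the first trace estimate in~\eqref{eq:trace} bounds this by $C \|u\|_{L^2(\Omega^\prime)}\|u\|_{H^1(\Omega^\prime)}$, and a weighted Young's inequality with parameter $\epsilon>0$ converts this in turn into $\epsilon\|u\|^2_{H^1(\Omega^\prime)} + \tfrac{1}{4\epsilon}C^\prime \|u\|^2_{L^2(\Omega^\prime)}$. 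If $\operatorname{Im}(\kappa) < 0$, the $\Sigma$-boundary term $\operatorname{Im}(\kappa)\|u\|^2_{L^2(\Sigma)}$ is handled identically; if $\operatorname{Im}(\kappa)\geq 0$, that term only helps and can be dropped from the lower bound.

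Combining these bounds gives
\[
\operatorname{Re}(a(u,u)) \geq \|\nabla u\|^2_{L^2(\Omega^\prime)} - C_2 \epsilon \|u\|^2_{H^1(\Omega^\prime)} - C_1(\epsilon)\|u\|^2_{L^2(\Omega^\prime)}
\]
for constants $C_1(\epsilon),C_2$ independent of $u$. Expanding $\|u\|^2_{H^1(\Omega^\prime)} = \|\nabla u\|^2_{L^2(\Omega^\prime)} + \|u\|^2_{L^2(\Omega^\prime)}$ on the right, I would then fix $\epsilon$ small enough that $C_2\epsilon \leq \tfrac12$, so that the resulting $\tfrac12 \|\nabla u\|^2_{L^2(\Omega^\prime)}$ remains on the left. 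Taking $M$ large enough to absorb the remaining $L^2(\Omega^\prime)$ pieces then yields $\operatorname{Re}(a(u,u)) + M\|u\|^2_{L^2(\Omega^\prime)} \geq \alpha \bigl(\|\nabla u\|^2_{L^2(\Omega^\prime)} + \|u\|^2_{L^2(\Omega^\prime)}\bigr)$ with $\alpha = \tfrac12$.

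The main obstacle is conceptual rather than computational: the nonlocal term is only controlled by boundary $L^2$-norms on $\Gamma$ and $\Sigma$, which must be absorbed into the volume $H^1$-norm, and that absorption is only possible via the interpolation form of the trace inequality in~\eqref{eq:trace}. The dual use of Young's inequality — first to split the mixed boundary product from Lemma~\ref{nllemma}, then to split the $L^2$--$H^1$ product coming from trace — is the key maneuver, and the flexibility of choosing $\epsilon$ is precisely what lets the gradient term on the left dominate.
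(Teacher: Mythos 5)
Your proof is correct and follows essentially the same route as the paper: test with $v=u$, take real parts, control $a_{NL}(u,u)$ via Lemma~\ref{nllemma} combined with the interpolation trace estimate~\eqref{eq:trace} and a weighted Young's inequality, then choose the weight small and $M$ large to recover $\alpha=\tfrac12$. The only differences are cosmetic (you apply Young's before the trace estimate rather than after, and you track the case of complex $\kappa$ explicitly, which the paper's proof implicitly suppresses by treating $-\kappa^2\|u\|^2$ as real and $-i\kappa\|u\|^2_{L^2(\Sigma)}$ as purely imaginary).
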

\begin{proof}
We calculate
\begin{equation}
\label{eq:garding}
a(u, u)
= \left\| \nabla u \right\|_{L^2(\Omega^\prime)}^2 - \kappa^2 \left\| u \right\|_{L^2(\Omega^\prime)}^2
- i \kappa \left\| u \right\|_{L^2(\Sigma)}^2 + a_{NL}(u, u),
\end{equation}
and note that the real part of this is just
\begin{equation}
\begin{split}
\operatorname{Re}(a(u,u))
& = \left\| \nabla u \right\|_{L^2(\Omega^\prime)}^2 - \kappa^2 \left\| u \right\|_{L^2(\Omega^\prime)}^2 + \operatorname{Re}(a_{NL}(u, u)) \\
& = \left \| u \right\|_{H^1(\Omega^\prime)}^2 - \left( \kappa^2 + 1 \right) \left\| u \right\|_{L^2(\Omega^\prime)}^2  + \operatorname{Re}(a_{NL}(u, u)).
\end{split}
\end{equation}
Using Lemma~\ref{nllemma}, the trace inequality~\eqref{eq:trace}, and a weighted Young's inequality, we can bound this below by
\begin{equation}
\begin{split}
\operatorname{Re}(a(u,u))
& \geq  \left \| u \right\|_{H^1(\Omega^\prime)}^2
- (\kappa^2 +1) \left\| u \right\|_{L^2(\Omega^\prime)}^2
  - C_{NL} \left\| u \right\|_{L^2(\Sigma)} \left\| u \right\|_{L^2(\Gamma)} \\
  & \geq  \left \| u \right\|_{H^1(\Omega^\prime)}^2
  - (\kappa^2+1) \left\| u \right\|_{L^2(\Omega^\prime)}^2 - C_{NL} \left\| u \right\|_{H^1(\Omega^\prime)} \left\| u \right\|_{L^2(\Omega^\prime)} \\
& \geq \tfrac{1}{2} \left\| u \right\|_{H^1(\Omega^\prime)}^2 - \left(\kappa^2 + 1 + \tfrac{C_{NL}^2}{2} \right) \left\| u \right\|_{L^2(\Omega^\prime)}^2,
\end{split}
\end{equation}
so~\eqref{eq:garding} holds with $\alpha = \tfrac{1}{2}$ provided $M \geq \kappa^2 + \tfrac{3+C_{NL}^2}{2}$.
\end{proof}

Now, following standard techniques for general elliptic (but possibly not coercive) problems~\cite{brenner2007mathematical}, suitably adapted for the complex-valued case, we have a general solvability and approximation result.  We suppose that the standard abstract approximation result
\begin{equation}
\inf_{v \in V_h} \left\| u - v \right\|_{H^1(\Omega^\prime)} \leq C_A h \left| u \right|_{H^2(\Omega^\prime)}
\end{equation}
holds and that the solution to~\eqref{eq:helmholtz} is in $H^2(\Omega^\prime)$.  We also require that the adjoint problem of finding $w \in H^1(\Omega^\prime)$ such that
\begin{equation}
a(v, w) = \left(f , v \right)
\end{equation}
for all $v \in H^1(\Omega^\prime)$ has a unique solution with regularity estimate
\begin{equation}
\left| u \right|_{H^2(\Omega^\prime)} \leq C_R \left\| f \right\|_{L^2(\Omega^\prime)}.
\end{equation}
With these assumptions, the arguments leading to Theorem 5.7.6 of~\cite{brenner2007mathematical} give this result:
\begin{theorem}
  \label{convergence}
Under the above conditions, there exists $h_0$ such that for $h \leq h_0$,
the discrete variational problem~\eqref{eq:h1err} has a unique solution $u_h$ satisfying the error estimate
\begin{equation}
\label{eq:h1err}
\left\| u - u_h \right\|_{H^1(\Omega^\prime)} \leq C \inf_{v \in V_h} \left\| u - v \right\|_{H^1(\Omega^\prime)}.
\end{equation}
Moreover, with the same assumptions, there exists another $C > 0$ such that
\begin{equation}
\label{eq:l2err}
\left\| u - u_h \right\|_{L^2(\Omega^\prime)} \leq C h \left\| u - u_h \right\|_{H^1(\Omega^\prime)}.
\end{equation}
\end{theorem}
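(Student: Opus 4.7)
The plan is to run the standard Schatz argument for indefinite elliptic problems, adapted to the complex-valued, nonlocal setting. The G{\aa}rding inequality only gives control of $\|u\|_{H^1}$ modulo an $L^2$ remainder, so the key device is to use the adjoint problem and its $H^2$ regularity to control that $L^2$ remainder by a factor of $h$ times the $H^1$ error. Once that is in hand, the $L^2$ term can be absorbed for $h$ small, and the quasi-optimal $H^1$ bound drops out; the $L^2$ bound~\eqref{eq:l2err} is then essentially a byproduct of the same duality argument.

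Concretely, I would begin with the continuous solvability: the G{\aa}rding inequality plus compact embedding $H^1(\Omega^\prime) \hookrightarrow L^2(\Omega^\prime)$ puts the operator in the Fredholm framework, so uniqueness for the homogeneous problem (needed as a hypothesis to invoke the adjoint regularity) yields existence. Next, for any candidate $u_h \in V_h$ I would use Galerkin orthogonality $a(u - u_h, v_h) = 0$ to write, for an arbitrary $v_h \in V_h$,
\begin{equation*}
\alpha \|u - u_h\|_{H^1(\Omega^\prime)}^2 \leq \operatorname{Re} a(u - u_h, u - u_h) + M \|u - u_h\|_{L^2(\Omega^\prime)}^2
= \operatorname{Re} a(u - u_h, u - v_h) + M \|u - u_h\|_{L^2(\Omega^\prime)}^2,
\end{equation*}
and then apply the boundedness of $a$ from the preceding proposition to bound the first term by $C_B \|u - u_h\|_{H^1} \|u - v_h\|_{H^1}$.

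For the $L^2$ term, I would invoke the Aubin--Nitsche duality: let $w \in H^1(\Omega^\prime)$ solve the adjoint problem with right-hand side $u - u_h$, so that $\|u - u_h\|_{L^2}^2 = a(u - u_h, w)$. By Galerkin orthogonality this equals $a(u - u_h, w - w_h)$ for any $w_h \in V_h$; choosing $w_h$ to be the best approximant, combining boundedness of $a$, the assumed approximation estimate, and the adjoint regularity $|w|_{H^2} \leq C_R \|u - u_h\|_{L^2}$ yields
\begin{equation*}
\|u - u_h\|_{L^2(\Omega^\prime)} \leq C h \|u - u_h\|_{H^1(\Omega^\prime)},
\end{equation*}
which is~\eqref{eq:l2err}. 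Substituting this back into the G{\aa}rding-based estimate and choosing $h_0$ small enough that $M C^2 h^2 \leq \tfrac{\alpha}{2}$, the $L^2$ term is absorbed into the left side, and taking the infimum over $v_h$ gives the quasi-optimal bound~\eqref{eq:h1err}. Uniqueness of the discrete problem (and hence existence, since $V_h$ is finite-dimensional) follows by applying this same estimate with $u = 0$ to any $u_h$ satisfying the homogeneous equation.

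The only step that requires genuine care, as opposed to simply citing \cite{brenner2007mathematical}, is verifying that the duality argument goes through for our bilinear form despite the nonlocal term. The potentially awkward point is that the adjoint of $a_{NL}$ couples $\Sigma$ back into $\Gamma$ via the transposed kernel $\overline{\widetilde{\mathcal{K}}(x-y)}$, so the adjoint problem is also nonlocal; however, since $\Gamma$ and $\Sigma$ remain separated, Lemma~\ref{nllemma} applies verbatim to the adjoint form with the same constant, and the boundedness and G{\aa}rding estimates hold for the adjoint as well. This makes the $H^2$ regularity of the adjoint the only genuinely external hypothesis, which is why it is (appropriately) imposed as an assumption of the theorem.
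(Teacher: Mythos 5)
Your proposal is correct and follows essentially the same route as the paper, which simply invokes the standard Schatz/duality argument (Theorem 5.7.6 of \cite{brenner2007mathematical}) built on the boundedness and G{\aa}rding estimates together with the assumed approximation property and adjoint $H^2$ regularity. Your spelled-out version even recovers the same constants noted in the paper's remark, namely $h_0 = (\alpha/2M)^{1/2}/(C_B C_A C_R)$, $C = 2C_B/\alpha$ in~\eqref{eq:h1err}, and $C_B C_A C_R$ in~\eqref{eq:l2err}, and your observation that Lemma~\ref{nllemma} applies verbatim to the adjoint form is the right justification for carrying the standard theory over to the nonlocal setting.
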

Note that this is a quasi-optimal result, independent of the particular choice of polynomial spaces.  So, it gives error estimates for higher-order approximations as well as for standard $P^1$ elements.

\begin{remark}
In particular, following~\cite{brenner2007mathematical}, one can show
\begin{equation}
h_0 = \frac{\left(\tfrac{\alpha}{2M}\right)^{1/2}}{C_B C_A C_R},
\end{equation}
and the constant $C$ in~\eqref{eq:h1err} can be taken as
$ \tfrac{2C_B}{\alpha}$ and that in~\eqref{eq:l2err} as $C_B C_A C_R$.
\end{remark}

\begin{remark}
This convergence theory assumes the layer potentials and boundary integrals are evaluated exactly.  These results can be
extended to account for approximation to the layer potential along
the lines of \cite[Thm.~13.6/7]{kress_linear_1999} and quadrature
in the bilinear forms using the standard theory of variational
crimes~\cite{brenner2007mathematical}.
\end{remark}

\section{Linear algebra}
\label{sec:solve}
We can effectively solve our variational formulation using preconditioned GMRES, which is a parameter-free algorithm approximating the solution of a linear system $A \bfx = \bfb$ as the element of the Krylov subspace $\operatorname{span}\{ A^i \bfb \}_{i=0}^m$ minimizing the equation residual.  Building the subspace does not require the entries of $A$, just the action of $A$ on vectors.  Unlike conjugate gradients, GMRES is not restricted to operators that are symmetric and positive definite.

For most problems arising in the discretization of PDE, the condition number of $A$ degrades quickly under mesh refinement, and GMRES is most frequently used in conjunction with a (left) \emph{preconditioner}.  Mathematically, we multiply the linear system through by some matrix $\widehat{P}^{-1}$:
\begin{equation}
\widehat{P}^{-1} A \bfx = \widehat{P}^{-1} \bfb,
\end{equation}
and so the Krylov space then is $\operatorname{span}\{ \left( \widehat{P}^{-1}A \right)^i \widehat{P}^{-1} \bfb \}_{i=0}^m$.

The overall performance of GMRES typically is determined by two factors -- the cost of building and applying the operators $\widehat{P}^{-1}$ and $A$, and the total number of iterations.  One hopes to obtain a per-application cost that scales linearly (or log-linearly) with respect to the number of unknowns in the linear system, and a total number of GMRES iterations that is bounded independently of the number of unknowns.  We think of $\widehat{P}^{-1}$ being an approximation to the inverse of some matrix $P$ that somehow approximates $A$.
In our case, we will find it useful to let $P = A^L$, the local part of the operator.  Then, applying $\widehat{P}^{-1}$ might correspond to applying the inverse of $P$ by a sparse direct method or perhaps just some sweeps of a multigrid algorithm.

\subsection{Structure of the discrete problem}%
\label{sec:disc}
By taking a standard finite element basis $\{\phi_i\}_{i=1}^N$ for $V_h$, the stiffness matrix is
\begin{equation}
\label{eq:stiffness}
A_{ij} = a(\phi_j, \phi_i) = a_{L}(\phi_j, \phi_i) + a_{NL}(\phi_j, \phi_i)
= A^L_{ij} + A^{NL}_{ij}.
\end{equation}
The portion $A^{L}$ is the standard sparse matrix one obtains for discretization of the Helmholtz operator with transmission boundary conditions, while $A^{NL}$ contains the contributions for the nonlocal terms.
To further consider the sparsity of this system, supposing we use standard $P^1$ basis functions and have about $\mathcal{O}(N^d)$ total vertices and hence basis functions. Then $A^L$ has nonzero entries corresponding to vertices sharing a common mesh element -- typically about 6-7 nonzeros per row on two-dimensional triangulations and 20-30 for three-dimensional tetrahedral meshes when using linear basis functions.  The total storage required for $A^L$ will be proportional to the number of vertices in the mesh.

Explicit sparse storage of $A^{NL}$, however, can be quite different.
Since $K$ involves a convolution-type integral over $\Gamma$,
\begin{equation}
A^{NL}_{ij} = \langle\left(i \kappa -  \tfrac{\partial}{\partial n}\right) K(\phi_j), \phi_i \rangle_\Sigma
\end{equation}
will be nonzero whenever $\phi_j$ is supported on $\Gamma$ and $\phi_i$ is supported on $\Sigma$.  Suppose that we have $\mathcal{O}(N^{d-1})$ basis functions supported on $\Sigma$ and the same order on $\Gamma$.  Then, $A^{NL}$ will be nonzero except for a dense logical subblock.  However, each basis function associated with $\Sigma$ will interact with each basis functions associated with $\Gamma$, so that the dense subblock will contain about $\mathcal{O}(N^{2d-2})$ nonzero entries.  When $d=2$, $A^{NL}$ has the same order of nonzeros as $A^L$ and so conceivably could be stored explicitly.  On the other hand, when $d=3$, $A^{NL}$ has $\mathcal{O}(N^4)$ nonzero entries and so its storage dominates that of the local part $A^L$.  Consequently, a matrix-free application of $A^{NL}$ that bypasses the storage may be preferred, as described in Section~\ref{sec:pytential}.

\subsection{Operator application}
From~\eqref{eq:stiffness}, the system matrix $A=A^L + A^{NL}$ is the sum of two matrices corresponding to the local and nonlocal terms in the bilinear form.  Although we could implement a matrix-free action of $A^L$, we opt to assemble a standard sparse matrix and only apply $A^{NL}$ in a matrix-free fashion as follows.

Recall that $A^{NL} = a_{NL}(\phi_j, \phi_i) = \langle \left(i\kappa - \tfrac{\partial}{\partial n} \right)D(\phi_j), \phi_i \rangle$.  Any vector $\bfx \in \mathbb{R}^{\dim V_h}$ can be identified uniquely with some $v_h$ in the finite element space so that
\begin{equation}
\left( A^{NL} \bfx \right)_i
= a_{NL}(v_h, \phi_i).
\end{equation}
Note that $(A^{NL} \bfx)_i$ will be nonzero exactly when $\phi_i$ has support on the exterior boundary $\Sigma$.

In a startup phase, we prepare the boundary geometry according to the algorithm of~\cite{wala_fast_2019} construct a GIGAQBX tree structure~\cite{rachh_fast_2017, wala_fast_2018, wala_fast_2019,  wala_approximation_2020} for the approximation of the layer potentials $D$ and $S$.  These allow us to efficiently approximate $\left( i \kappa - \tfrac{\partial}{\partial n} \right) D(v_h)$ at a collection of `target' points.  In particular, we can evaluate on quadrature points on each facet of $\Sigma$.  Hence, we can loop over the facets on $\Sigma$ to integrate against the basis functions supported on that facet and sum the contributions in the usual way.  This gives the action of $A^{NL}$ onto some $\bfx$, and the full action of $A$ onto $\bfx$ is computed by summing this with $A^L \bfx$ computed by a standard sparse matrix-vector product.

\subsection{Preconditioners}
Rather than letting the preconditioning matrix $P$ equal $A$ itself, we opt for $P = A^L$.  If we were to
exactly invert $A^L$, then the resulting system becomes
\begin{equation}
\left( I + \left(A^L\right)^{-1} A^{NL} \right) x = \left( A^{L} \right)^{-1} b.
\end{equation}
Since $A^L$ discretizes an elliptic equation and $A^{NL}$ a bounded operator, this has the form of a discretization of a compact perturbation of the identity.  In~\cite{gmati2007comments}, GMRES convergence for a similar situation was shown to be very favorable.  We also comment that preconditioning a system to obtain a compact perturbation of the identity was used heuristically to good effect for B\'enard convection~\cite{howle2012block}.

It is possible to replace the inverse of $A^L$ with an approximation, and it is likewise possible to use a suitable, spectrally equivalent preconditioner, such as
algebraic multigrid~\cite{adams2004algebraic, ruge1987algebraic, olson2010smoothed}.  This gives a preconditioner that scales well with mesh refinement, but can degrade as the wave number increases~\cite{ernst2012difficult, gander2015applying}.

\section{Implementation}
\label{sec:imp}
Our implementation rests on combining the capabilities of \firedrake~\cite{firedrake} for the finite element part of our problem and \pytential{}~\cite{pytential} for the evaluation of layer potentials $K$ and $S$.   Krylov solvers and preconditioners are accessed using \petsc.

\subsection{Firedrake}
\firedrake~\cite{firedrake} is an automated system for the solution of partial differential equations using the finite element method.  It allows users to describe the variational form of a PDE using the Unified Form Language~\cite{Alnaes:2014}, from which it generates effective lower-level numerical code.  We make use of \firedrake{} for loading computational meshes, defining the local part of $a$, and integrating evaluated layer potentials against test functions.   \firedrake{} can be built supporting complex arithmetic at every level (definition of bilinear forms down to a complex-enabled \petsc{} build).

\firedrake{} also makes it possible to compare our new method against
domain truncation by means of a perfectly matched layer (PML).
We implement the technique of~\cite{Bermudez:2006}~which uses an
unbounded integral as the absorbing function on the PML.  This approach is parameter-free and simple to implement in UFL.

\subsection{Pytential}
\label{sec:pytential}
\pytential{}~\cite{pytential} is an open-source, MIT licensed software system that allows the
evaluation of layer potentials from source geometry represented by
unstructured meshes in two and three dimensions with near-optimal complexity
and at a high order of accuracy. The main aspects of functionality
provided by \pytential{} are the discretization of a source surface
using discretization tools (through its use of a sister tool,
\software{meshmode}~\cite{meshmode}) for high-order accurate nonsingular
quadrature~\cite{xiao_numerical_2010}, its refinement according to
accuracy requirements~\cite{wala_fast_2019},
and, finally, the evaluation of weakly singular, singular, and
hypersingular integral operators via quadrature by expansion
(QBX)~\cite{kloeckner_quadrature_2013} and the associated GIGAQBX fast
algorithm~\cite{wala_fast_2018}, with rigorous accuracy guarantees in
two and three dimensions~\cite{wala_approximation_2020}. This fast
algorithm, can, in turn make use of \software{FMMLIB}~\cite{gimbutas_fast_2009,pyfmmlib}
for the evaluation of translation operators in the moderate-frequency
regime for the Helmholtz equation.

While the layer potential evaluations in $a_{NL}$ are nonsingular, we
nonetheless benefit from the use of the QBX machinery in the event
that source and target surfaces are chosen to lie in close proximity
for increased efficiency of the finite element method. See~\cite{wala_fast_2019}
for estimates of the error incurred in the evaluation of the layer
potential.
Our 2D experiments employ higher order discretizations and fine meshes,
so we set FMM order adaptively at each level of the FMM tree
to provide a precision stricter than machine epsilon for double precision.
Our 3D experiments are limited to piecewise linear elements on
coarse meshes, so it is sufficient to use an FMM order of 12.

\subsection{Representing the linear system in \petsc{}}
At the top level, our code builds and solves the linear system \eqref{eq:stiffness}.  To do this, we have implemented a \software{Python} matrix type in \software{petsc4py}~\cite{Dalcin:2011}.  Its Python context builds the bilinear form $a_{L}$ in \firedrake{} and assembles $A^{L}$.  It also sets up the layer potential evaluation in \pytential{}.  The class also provides a multiplication method that multiplies by $A^L$ (itself just a \petsc{} call) and $A^{NL}$ (which requires more code) and sums the results.  It also provides a handle to $A^L$ so that it can be used as a preconditioning matrix.  Setting up a KSP context in \petsc{}, we can then select from any available Krylov method and apply any preconditioning technique to $A^L$ in the standard ways.

The application of $A^{NL}$ to a vector requires some low-level interaction of \firedrake{} and \pytential{} beneath their public interfaces, and warrants some explanation. Data transfer between \pytential{} and \firedrake{} occurs in two directions. The transfer of density information from \firedrake{} to \pytential{} occurs through (exact) interpolation within $P^N$ from the $C^0$ finite element space used for $a_L$ to the discontinuous finite element space on Vioreanu-Rokhlin nodes~\cite{vioreanu_spectra_2014} used for the density in $a_{NL}$. This requires some attention to details regarding ordering of degrees of freedom, vertices~\cite{RognesKirbyEtAl2009a}, and data formats. The transfer of layer potential information back to \firedrake{} meanwhile is straightforward by comparison. \pytential{} is able to evaluate the layer potential with guaranteed accuracy \emph{anywhere} in the target domain, even close to the source surface, where this might otherwise require special treatment such as near-singular quadrature, e.g. by adaptive techniques. Thus we merely evaluate the layer potential at a set of quadrature points supplied by \firedrake{} to obtain an approximate projection of the (analytically) $C^\infty$ potential back into the $C^0$ finite element space.

\section{Numerical results}
\label{sec:results}
Now, we present some empirical investigation of our method.  We establish the accuracy obtained using finite element approximation using our nonlocal boundary condition and also consider preconditioning the nonlocal boundary system.  We find that the accuracy obtained using the nonlocal boundary condition compares favorably with that rendered by PML and the transmission boundary condition.  Moreover, when methods are available to accurately approximate the inverse of $A^L$, we find that it is an excellent preconditioner for the overall system.  However, as the wave number increases, the difficulty of attaining an accurate approximation increases.

To verify the accuracy of our method in two and three space dimensions, we chose the unit disc/sphere as a scatterer and use a manufactured solution based on the free-space Helmholtz Green's function.  That is, the true solution outside of the scatterer is taken as
\begin{equation*}
    \begin{cases}
        u(x) = \frac{i}{4} H_0^{(1)}(\kappa |x|) & d = 2, \\
        u(x) = \frac{i}{4\pi|x|} e^{i\kappa |x|} & d = 3.
    \end{cases}
\end{equation*}
In two dimensions, we truncated the domain as the $6\times6$ square centered at the origin, shown in Figure~\ref{fig:2d_pml_colored} with the PML region separately highlighted.  In three dimensions, we created an analogous mesh, the unit sphere embedded in $[-3,3]^3$, with the PML sponge region taking up $[-3,3]^3\setminus[-2,2]^3$, shown in Figure~\ref{fig:3dmesh}.

\begin{figure}[!ht]
    \begin{center}
        \includegraphics[width=0.8\textwidth]{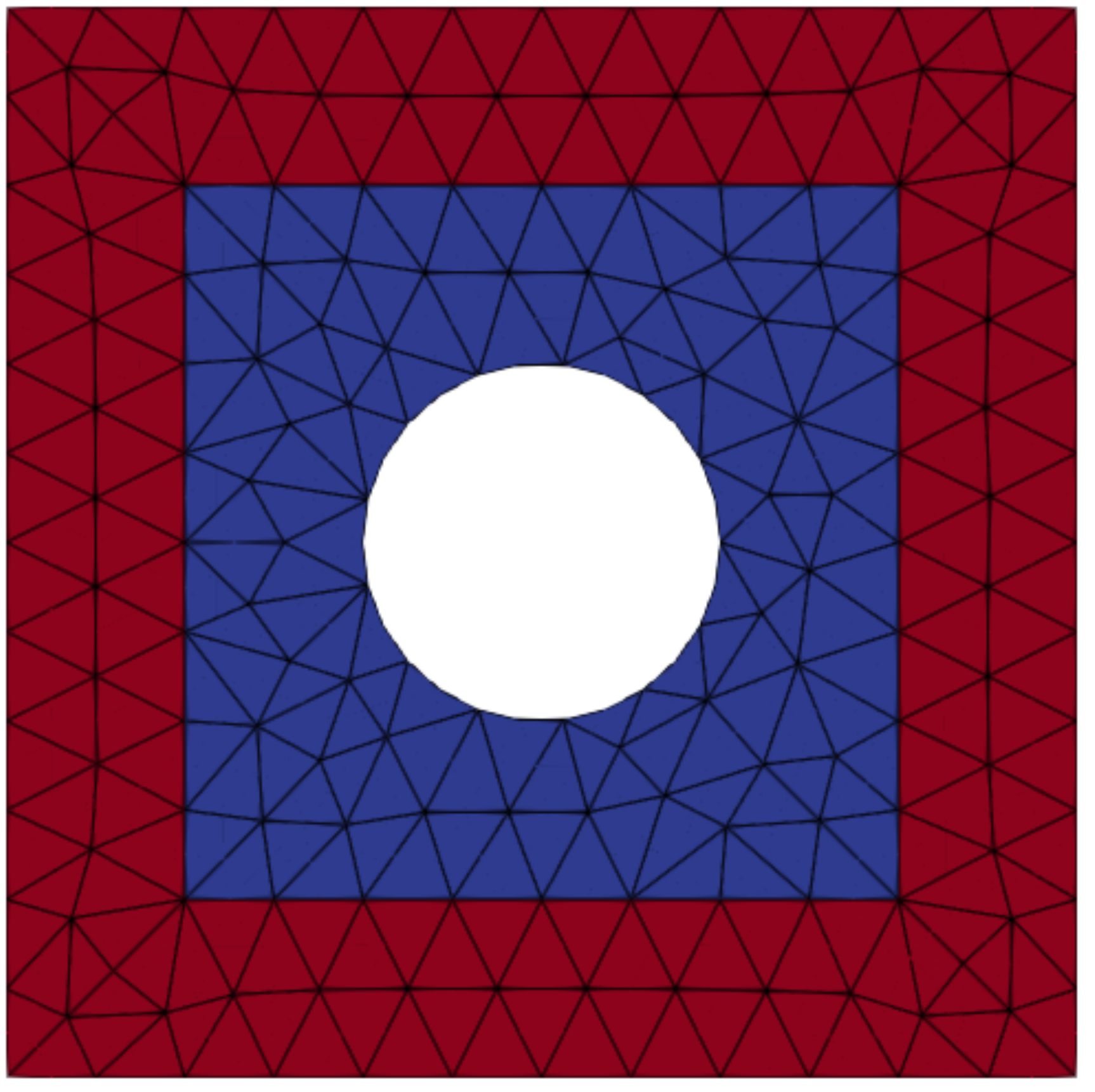}
    \end{center}
    \caption{Example 2d mesh (with PML region colored in red)}
    \label{fig:2d_pml_colored}
\end{figure}

\begin{figure}[!ht]
    \includegraphics[width=0.9\textwidth]{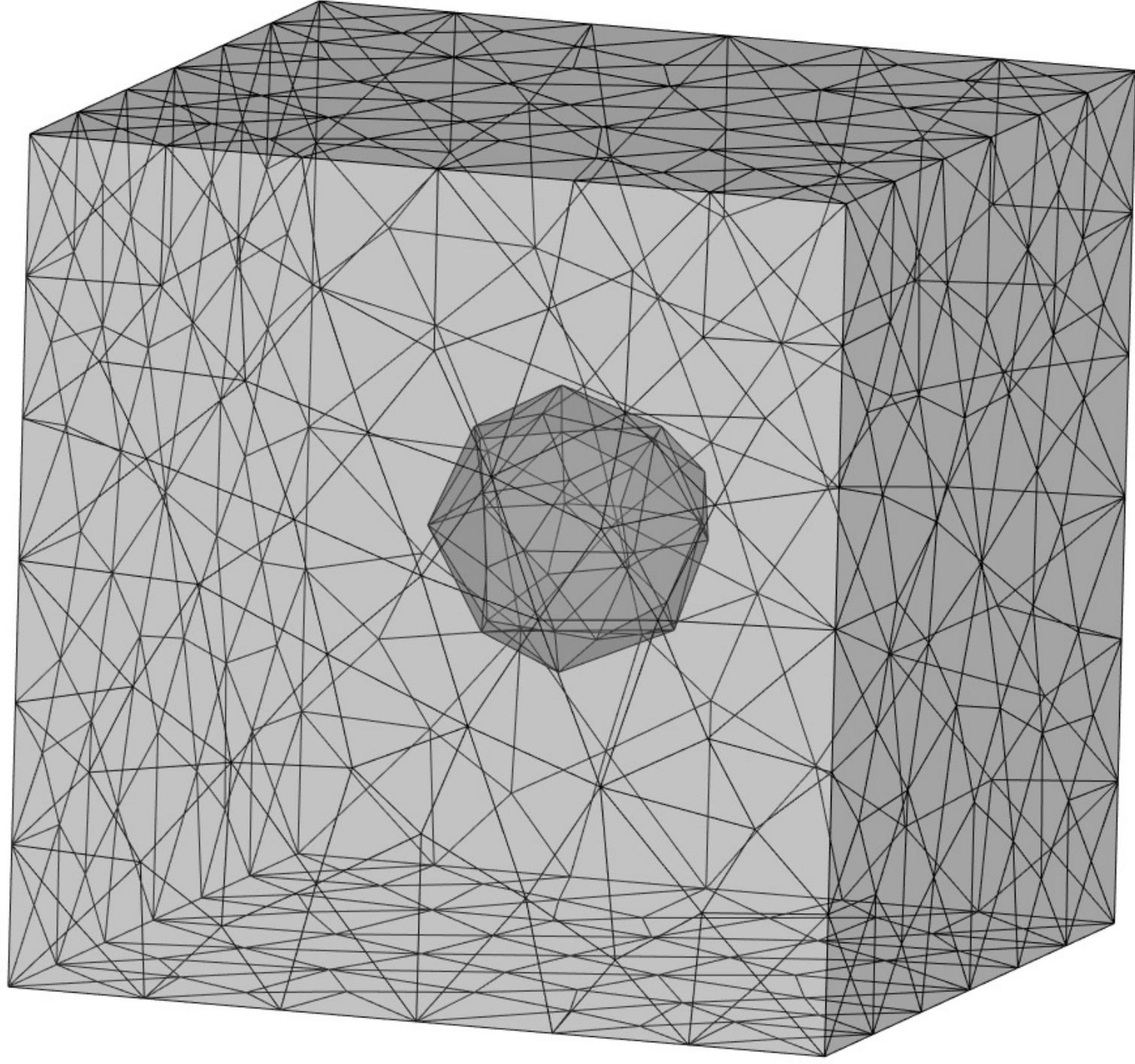}
    \caption{Sample coarse 3D mesh with a spherical exclusion at the center of a cube.}
    \label{fig:3dmesh}
\end{figure}

We compare our approach with transmission boundary conditions
and with PML.
Since the PML-based simulation is only accurate in the non-PML region (the blue region in
Figure~\ref{fig:2d_pml_colored}),
we evaluate the error only over the non-PML region for all
methods, even though we solve over the entire computational domain.
For computations with piecewise linear approximating spaces, we use affine geometry, although we use quadratic geometry for computations with higher-order spaces.

We implement both PML and transmission boundary
conditions in \firedrake.
For PML, we use unbounded absorbing
functions as described in~\cite{Bermudez:2006}.
This form of PML has no parameters to be fitted,
is simple to implement in \firedrake, and
has been shown to recover the exact
solution (up to discretization errors) on annular domains~\cite{Bermudez2007}.
However, other implementations of PML (or equivalent boundary
conditions) can obtain higher
accuracy at lower wavelengths~\cite{Druskin2016, Hagstrom2019}.

We compare the accuracy of transmission boundary conditions, PML,
transmission, and our new approach in 2D for degree 1
approximations
in Figure~\ref{fig:2ddegree1accuracy}.  We observe that the transmission boundary
conditions, which incur a perturbation of the PDE, lead to convergence
to a slightly incorrect solution.  Both PML and our new boundary
conditions, however, seem to be converging to the true solution at the
proper rate of $\mathcal{O}(h^2)$ predicted in
Theorem~\ref{convergence}.  For small $\kappa$, the nonlocal condition
seems quite a bit more accurate, although they give nearly the same
error for larger $\kappa$.  Accuracy for the 2D case is reported
for higher degrees  in
Figures~\ref{fig:2ddegree2accuracy},~\ref{fig:2ddegree3accuracy}, and~\ref{fig:2ddegree4accuracy} respectively.  In these case, we observe the theoretically-predicted convergence rates for our nonlocal method, although for higher degree our (suboptimal) PML does not obtain full accuracy.  Our theoretical results apply to 3D as well as 2D, although the computations are considerably more expensive.  As a simple test, we have used linear polynomials on rather coarse meshes, presenting the results in
Figure~\ref{fig:3ddegree1accuracy}.  We see comparable behavior to that obtained in 2D.

In Figure~\ref{fig:accuracyVSdofs}, we study the error obtained versus the number of degrees of freedom using various orders of approximation.  In these computations, we use the domain $\Omega^\prime$ as $[-2,2]^2$ minus the unit square and pose the boundary condition~\eqref{eq:dudnmess} along the outer boundary.  Then, for each $\kappa \in \{ 0.1, 1, 5, 10 \}$ and polynomial degrees 1 through 4, we computed the $L^2$ error in the numerical approximation.  In addition to giving faster convergence rates, we also see that higher-order polynomials provide a lower error per degree of freedom used.  Moreover, no modifications to our method or boundary condition were required to obtain this higher accuracy.
 
\def \plotaccuracy[#1](#2)[#3](#4)[#5]
{
    \begin{tikzpicture}[scale=0.65]
    \begin{loglogaxis}[legend style={at={(1.0,0.0)},anchor=south east},
                       xlabel=$h$,
                       ylabel=#5$L^2$ Error,
                       title={$\kappa=$#1, degree$=$#4},
                       cycle list={
                           {#2, mark=x}, {#2, mark=o}, {#2, mark=+},
                       },
                       filter discard warning=false,
                       scale=0.8]
        \foreach \method in {pml,transmission,nonlocal} {
            \addplot+[only marks, thick]
                table [x=h,
                       y
                       expr={\thisrow{kappa}==#1?\thisrow{\method\space lu #5L2 Error}:nan},
                       col sep=comma] {#3d_data_degree#4.csv};
            \addlegendentryexpanded{\method};
        };
    \end{loglogaxis}
    \end{tikzpicture}
}
\def \plotAccuracyOverKappa[#1](#2)[#3]
{
    \begin{center}
        \begin{tabular}{c c}
            \plotaccuracy[0.1](red)[#1](#2)[#3] & \plotaccuracy[1.0](blue)[#1](#2)[#3] \\
            \plotaccuracy[5.0](brown)[#1](#2)[#3] & \plotaccuracy[10.0](black)[#1](#2)[#3] \\
        \end{tabular}
     \end{center}
 }
 \begin{figure}[!ht]
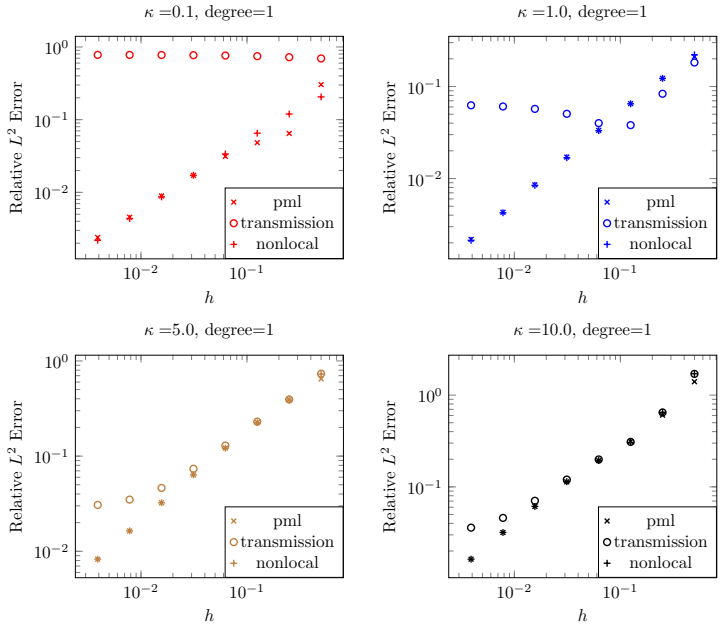

    \plotAccuracyOverKappa[2](1)[Relative ]
\caption{$L^2$ Relative error using degree 1 polynomials with respect to refinement in 2D. 
A KSP relative tolerance of $10^{-12}$ was used.
Since the transmission BC are a perturbation of the actual boundary value problem, the convergence levels off as the method converges to a slightly incorrect solution.  The PML and nonlocal methods give comparable accuracy.}
~\label{fig:2ddegree1accuracy} \end{figure}
\begin{figure}[!ht]
    \plotAccuracyOverKappa[2](2)[Relative ]
\caption{$L^2$ Relative error using degree 2 polynomials with respect to refinement in 2D.
A KSP relative tolerance of $10^{-12}$ was used.
}
\label{fig:2ddegree2accuracy} \end{figure}
\begin{figure}[!ht]
    \plotAccuracyOverKappa[2](3)[Relative ]
\caption{$L^2$ Relative error using degree 3 polynomials with respect to refinement in 2D.
A KSP relative tolerance of $10^{-12}$ was used.
}
\label{fig:2ddegree3accuracy} \end{figure}
\begin{figure}[!ht]
    \plotAccuracyOverKappa[2](4)[Relative ]
\caption{$L^2$ Relative error using degree 4 polynomials with respect to refinement in 2D.
A KSP relative tolerance of $10^{-12}$ was used.
}
\label{fig:2ddegree4accuracy} \end{figure}
We recall that our theoretical results apply to 3D as well as 2D, although the computations are considerably more expensive.  As a simple test, we have used linear polynomials on rather coarse meshes, presenting the results in
Figure~\ref{fig:3ddegree1accuracy}.  We see comparable behavior to that obtained in 2D.

In Figure~\ref{fig:accuracyVSdofs}, we study the error obtained versus the number of degrees of freedom using various orders of approximation.  In these computations, we use the domain $\Omega^\prime$ as $[-2,2]^2$ minus the unit square and pose the boundary condition~\eqref{eq:dudnmess} along the outer boundary.  Then, for each $\kappa \in \{ 0.1, 1, 5, 10 \}$ and polynomial degrees 1 through 4, we computed the $L^2$ error in the numerical approximation.  In addition to giving faster convergence rates, we also see that higher-order polynomials provide a lower error per degree of freedom used.  Moreover, no modifications to our method or boundary condition were required to obtain this higher accuracy.
\begin{figure}[!ht]
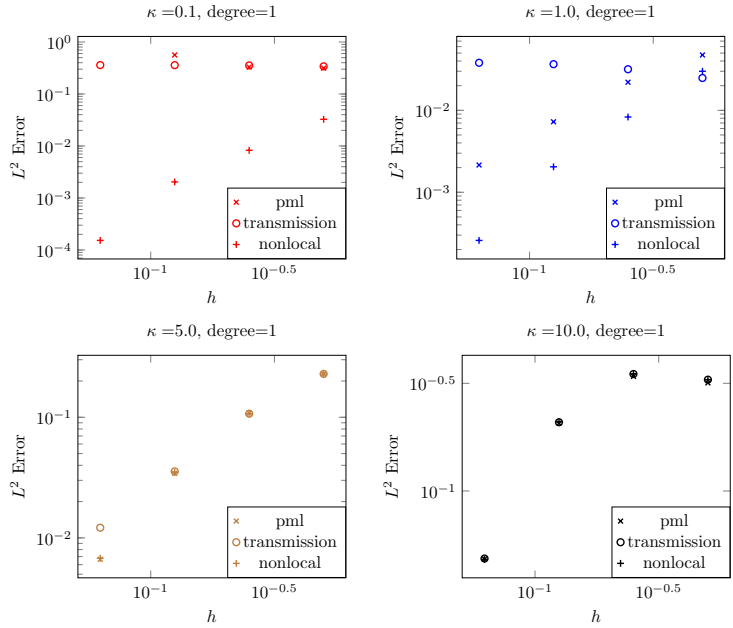

    \plotAccuracyOverKappa[3](1)[]
\caption{$L^2$ Error with respect to refinement in 3D using degree 1
    polynomials.
A KSP relative tolerance of $10^{-7}$ was used.
    Comparable
results to Figure~\ref{fig:2ddegree1accuracy} are obtained, although we have not been able to attain the same mesh resolutions as in 2D.}
\label{fig:3ddegree1accuracy}
\end{figure}
\def \plotaccuracyVdofs[#1]
{
    \begin{tikzpicture}[scale=0.8]
    \begin{loglogaxis}[legend style={at={(0.0,0.0)},anchor=south west},
                       xlabel=Number of DOFs,
                       ylabel=Relative $L^2$ Error,
                       title={$\kappa=$#1},
                       cycle list={
                            {red, solid, mark=*},
                            {blue, densely dotted, mark=square*},
                            {brown, densely dashed, mark=triangle*},
                            {black, dashdotted, mark=diamond*},
                       },
                       filter discard warning=false,
                       scale=0.8]
        \foreach \degree in {1,2,3,4} {
            \addplot+[only marks, thick]
                table [x=ndofs,
                       discard if not={kappa}{#1},
                       y
                       expr={\thisrow{degree}==\degree?\thisrow{nonlocal lu Relative L2 Error}:nan},
                       col sep=comma] {2d_data_accuracy_vs_dofs.csv};
            \addlegendentryexpanded{Degree \degree};
        }
    \end{loglogaxis}
    \end{tikzpicture}
}
\begin{figure}[!ht]
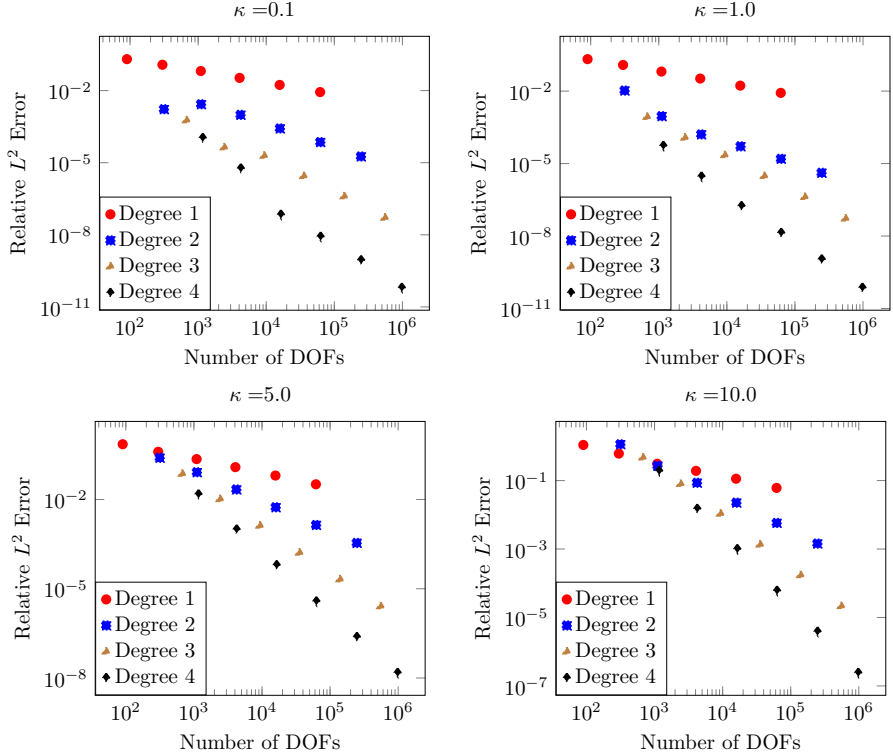

    \begin{center}
        \begin{tabular}{c c}
            \plotaccuracyVdofs[0.1] & \plotaccuracyVdofs[1.0] \\
            \plotaccuracyVdofs[5.0] & \plotaccuracyVdofs[10.0] \\
        \end{tabular}
     \end{center}
 \caption{Relative $L^2$ error of our nonlocal method for degrees 1-4
    as the number of degrees of freedom increase.
    A KSP relative tolerance of $10^{-12}$ was
    used. We used a mesh with no PML sponge region and $\Sigma = [-2, 2]^2$.
}
     \label{fig:accuracyVSdofs}
\end{figure}

Now, we turn to efficient solution of the linear system, focusing on the two-dimensional case.
In table~\ref{tab:pctypeisnone} we see that, even for low wave numbers on coarse meshes with a piecewise linear discretization, solving the system without a preconditioner is not scalable.

\begin{table}[hb]
    \begin{center}
        \pgfplotstabletypeset[col sep=comma,
                              columns/ndofs/.style={
                                  column name=DOFs,
                                  column type=r,
                              },
                              columns/h/.style={
                                  precision=3,
                                  column name=$h$,
                                  column type=l,
                              },
                              columns/kappa/.style={
                                  fixed,
                                  fixed zerofill,
                                  precision=1,
                                  column name=$\kappa$,
                              },
                              columns/nonlocal none 1e-12 Iteration Number/.style={
                                  column name={Iteration Count},
                                  column type=r,
                              },
                              every head row/.style={before row=\toprule, after row=\midrule},
                              every last row/.style={after row=\bottomrule},
                              ]{2d_data_pctype_none.csv}
    \end{center}
    \caption{Number of iterations to convergence for the nonlocal method
with no preconditioner,
degree 1 polynomials, a KSP relative tolerance of $10^{-12}$,
and GMRES restart set to 200.
Iteration counts were higher with a GMRES restart of 100,
and much higher with the default GMRES restart of 30.}
    \label{tab:pctypeisnone}
\end{table}

We want to demonstrate that the local part of our operator~\eqref{eq:transmission} provides an effective preconditioner, so that our new method can be seen as comparably difficult to solve as the local problem.
As a first approach, we can compute a sparse LU factorization of $A^{L}$ to apply the inverse as a preconditioner for $A$.  The GMRES iteration counts are shown in Figure~\ref{fig:luIterationNumber}.  For a fixed $\kappa$, we see mild \emph{decrease} in the iteration count under mesh refinement.  Moreover, for a fixed mesh, increasing $\kappa$ corresponds only to a slight increase in iteration count.  So, if the underlying transmission operator can be effectively inverted, this will in turn serve as an excellent preconditioner for the system with nonlocal boundary conditions.  
Using a direct method on $A^L$, sparse factorization is typically the dominant cost.  Then, each Krylov iteration requires a sparse matrix-vector product, an FMM evaluation, some quadrature, and solution with the sparse factors.

\begin{figure}[!ht]
\def \plotNonlocalIterationNumberTwoD[#1]
{
    \begin{tikzpicture}[scale=0.75]
        \begin{semilogxaxis}[
            legend style={at={(1.0,0.0)},anchor=south east},
            xlabel=$h$,
            ylabel=Iteration Number,
            title={degree$=$#1},
            filter discard warning=false,
            cycle list={%
                {red, solid, mark=*},
                {blue, densely dotted, mark=square*},
                {brown, densely dashed, mark=triangle*},
                {black, dashdotted, mark=diamond*},
            }]
        \foreach \k in {0.1,1.0,5.0,10.0} {
            \addplot+[thick]
                table [x=h,
                       y expr={\thisrow{kappa}==\k?\thisrow{nonlocal lu Iteration Number}:nan},
                       col sep=comma] {2d_data_degree#1.csv};
           \addlegendentryexpanded{$\kappa=$\k};
        };
        \end{semilogxaxis}
    \end{tikzpicture}
}
\begin{center}
    \begin{tabular}{c c}
        \plotNonlocalIterationNumberTwoD[1] & \plotNonlocalIterationNumberTwoD[2]  \\
        \plotNonlocalIterationNumberTwoD[3] & \plotNonlocalIterationNumberTwoD[4] \\
    \end{tabular}
\end{center}
\caption{GMRES iteration counts for a two-dimensional mesh using LU factorization for $A^L$ as a preconditioner for various values of $\kappa$.  Fixing $\kappa$ and refining the mesh (right to left) leads to a slight decrease in iteration count, while fixing a mesh and increasing $\kappa$ leads to a mild increase in iteration count.}
\label{fig:luIterationNumber}\end{figure}

At large enough scale, one might wish to (approximately) invert $A^{L}$ with an iterative method rather than factorization.  To move in this direction, we used \texttt{gamg}~\cite{adams2004algebraic}, a \petsc{}-accessible algebraic multigrid scheme that supports complex arithmetic.  This performed admirably at low wave number ($\kappa \lesssim 1$), but not beyond this.
We were able to tackle higher wave numbers using the approach in~\cite{olson2010smoothed}.  The Laplacian has eigenmodes that become increasingly oscillatory as the eigenvalues increase.  The indefinite Helmholtz operator shifts the eigenvalues (with the same eigenmodes) leftward in the complex plane.  Hence, the eigenvalues closest to zero correspond to certain higher-frequency modes for Helmholtz.  The technique in~\cite{olson2010smoothed} approximates this oscillatory near null space with plane waves.   To apply this method, we wrapped PyAMG~\cite{OlSc2018} as a \software{PETSc4Py} preconditioner.   We applied a fixed number of W-multicycles, augmented with plane~waves in the same way as in~\cite{OlSc2018}, to $A^L$ as a preconditioner for the overall system.  Figure~\ref{fig:EXAMPLEPYAMG} shows the results we obtained.  The preconditioner is very effective at low $\kappa$ but requires more iterations for larger ones.  However, we see that applying more W-cycles within the preconditioner typically leads to a lower outer iteration count.  Comparing Figure~\ref{fig:EXAMPLEPYAMG} to Figure~\ref{fig:luIterationNumber} suggests that the difference in iteration counts follows from the difficulty in obtaining an effective iterative method for the regular Helmholtz operator rather than new difficulties presented by our nonlocal boundary condition.

\begin{figure}[!ht]
\begin{center}
    \def \plotwcycles[#1]
    {
        \begin{tikzpicture}[scale=0.8]
        \begin{semilogxaxis}[legend style={at={(0.80,0.50)},anchor=south east},
                             xlabel=$h$,
                             ylabel=Iteration Count,
                             title={W-Cycles: #1},
                             filter discard warning=false,
                             cycle list={%
                                {red, solid, mark=*},
                                {blue, densely dotted, mark=square*},
                                {brown, densely dashed, mark=triangle*},
                                {black, dashdotted, mark=diamond*},
                             },
                             scale=0.8]
            \foreach \kappaval in {0.1,1.0,5.0,10.0} {
                \addplot+[thick]
                    table [x=h,
                           discard if not={pyamgmaxiter}{#1},
                            y expr={\thisrow{kappa}==\kappaval?\thisrow{nonlocal pyamg Iteration Number}:nan},
                           col sep=comma] {pyamg_2d_data.csv};
                \addlegendentryexpanded{$\kappa=\kappaval$};
            };
        \end{semilogxaxis}
        \end{tikzpicture}
    }
    \begin{tabular}{c c}
        \plotwcycles[1] & \plotwcycles[3]
    \end{tabular}
 \end{center}
\caption{Outer GMRES iteration count for various meshes and $\kappa$ values.  We apply PyAMG's smoothed aggregation augmented with plane waves to $A^L$ as a preconditioner for the total system.  At small wave numbers, we see relatively low iteration counts that are fairly flat under mesh refinement.  As $\kappa$ increases, however, far more outer iterations are required to obtain convergence.}
\label{fig:EXAMPLEPYAMG}\end{figure}

Finally, we devise an experiment to demonstrate our method's robustness
with respect to the distance between the scatterer boundary $\Gamma$ and
the truncated boundary $\Sigma$.
We continue to use the circle of radius 1 centered at the origin
as the scatter $\Gamma$.
For side length $s \in \{2.25, 2.5, 3.0, 4.0, 5.0, 6.0\}$,
we truncated the domain as the square $[-\tfrac s2, \tfrac s2]^2$
take out the circle of radius 1.
In Figure~\ref{fig:2dvarying_domain}, we measure
the relative $L^2$ error of our nonlocal method for each domain.  We see no pathologies emerging as the computational domain becomes smaller.

\begin{figure}[H]
\begin{center}
    \def\plotGivenDomainAccuracy[#1]
    {
        \begin{tikzpicture}[scale=0.8]
        \begin{loglogaxis}[legend style={at={(1.0,0.0)},anchor=south east},
                           xlabel=$h$,
                           ylabel=Relative $L^2$ Error,
                           title={$s=#1, \;\operatorname{dist}(\Sigma, \Gamma) = \pgfmathparse{#1/2-1}\pgfmathresult$},
                           cycle list={%
                              {red, solid, mark=*},
                              {blue, densely dotted, mark=square*},
                              {brown, densely dashed, mark=triangle*},
                              {black, dashdotted, mark=diamond*},
                           },
                           filter discard warning=false,
                           scale=0.8]
            \foreach \kappaval in {0.1,1.0,5.0,10.0} {
                \addplot+[thick]
                    table [x=h,
                           discard if not={Outer Side Length}{#1},
                           y expr={\thisrow{kappa}==\kappaval?\thisrow{nonlocal lu Relative L2 Error}:nan},
                           col sep=comma] {2d_data_varying_domain.csv};
                \addlegendentryexpanded{$\kappa=\kappaval$};
            };
        \end{loglogaxis}
        \end{tikzpicture}
    }
    \begin{tabular}{c c}
        \plotGivenDomainAccuracy[2.25] & \plotGivenDomainAccuracy[2.5] \\
        \plotGivenDomainAccuracy[3.0] & \plotGivenDomainAccuracy[4.0] \\
        \plotGivenDomainAccuracy[5.0] & \plotGivenDomainAccuracy[6.0] \\
    \end{tabular}
 \end{center}
\caption{Relative $L^2$ error of the nonlocal method
using degree 1 polynomials
as $\kappa$ and the truncated domain size vary.
A KSP relative tolerance of $10^{-12}$ was used.
}
\label{fig:2dvarying_domain}\end{figure}

\section{Conclusions and future work}
We have proposed a new nonlocal boundary condition for exterior Helmholtz problems.  This condition, based on Green's formula and expressed in terms of layer potentials, works in general unstructured geometry in two and three dimensions.  Thanks to a G{\aa}rding inequality, we have optimal finite element error estimates under standard conditions.  The nonlocal terms are amenable to approximation by fast multipole expansions, and the discrete system can be readily preconditioned by its local part.  In the future, it should be possible to extend the analysis to handle inexactness in evaluating the boundary terms.  Moreover, we anticipate being able to apply this technique to a much broader class of problems such as exterior curl-curl problems.  Additionally, we are working to integrate layer potentials with Firedrake's top-level language to make it easier to apply the method.

\bibliographystyle{siamplain}
\bibliography{bib}
\end{document}